\renewcommand{\Im}{\operatorname{Im}}
\renewcommand{\Re}{\operatorname{Re}}
\renewcommand{\Im}{\operatorname{Im}}
\renewcommand{\(}{\left\(}
\renewcommand{\)}{\right\)}
\renewcommand{\[}{\left\[}
\renewcommand{\]}{\right\]}
\numberwithin{equation}{section}
 \theoremstyle{plain}
\newtheorem{theorem}{Theorem}[section]
\newtheorem{lemma}[theorem]{Lemma}
\newtheorem{corollary}[theorem]{Corollary}
\def\proof{\@ifnextchar[{\@oproof}{\@nproof}}
\def\@oproof[#1][#2]{\trivlist\item[\hskip\labelsep\textit{#2 Proof of\
#1.}~]\ignorespaces}
\def\@nproof{\trivlist\item[\hskip\labelsep\textit{Proof.}~]\ignorespaces}
\begin{document}
\title[An asymptotic expansion for a Lambert series]{An asymptotic expansion for a Lambert series associated to the symmetric square $L$-function}

\author{Abhishek Juyal}
\address{Abhishek Juyal\\Department of Mathematics\\
The Institute of Mathematical Sciences \\
4th Cross Street, CIT Campus, Tharamani \\ 
Chennai, Tamil Nadu 600113, India } 
\email{abhinfo1402@gmail.com}

 \author{Bibekananda Maji}
\address{Bibekananda Maji\\ Discipline of Mathematics \\
Indian Institute of Technology Indore \\
Indore, Simrol, Madhya Pradesh 453552, India.} 
\email{bibekanandamaji@iiti.ac.in}

\author{Sumukha Sathyanarayana}
\address{Sumukha Sathyanarayana\\Department of Mathematical and Computational Sciences\\ National Institute of Technology Karnataka, Suratkal \\
Srinivasnagar, Mangalore 575025, Karnataka, India.} 
\email{neerugarsumukha@gmail.com}

\thanks{2010 \textit{Mathematics Subject Classification.} Primary 11M06; Secondary 11M26, 11N37.\\
\textit{Keywords and phrases.} Lambert series, Riemann zeta function, non-trivial zeros, Symmetric square $L$-function,  Rankin-Selberg $L$-function}

\maketitle

\begin{abstract}
Hafner and Stopple proved a conjecture of Zagier,  that the inverse Mellin transform of the symmetric square $L$-function associated to the Ramanujan tau function has an asymptotic expansion in terms of the non-trivial zeros of the Riemann zeta function $\zeta(s)$.  Later,  Chakraborty,  Kanemitsu and the second author extended this phenomenon for any Hecke eigenform over the full modular group.  In this paper,  we study an asymptotic expansion of the Lambert series
$$
y^k \sum_{n=1}^\infty \lambda_{f}( n^2 ) \exp (- ny),  \quad \textrm{as}\,\,  y \rightarrow 0^{+},
$$
where $\lambda_f(n)$ is the $n$th Fourier coefficient of a Hecke eigen form $f(z)$ of weight $k$ over the full modular group.  
\end{abstract}

\section{Introduction}
Let $\Delta(z):= e^{2\pi i z} \prod_{n=1}^{\infty} (1 - e^{2 \pi i n z})^{24 }= \sum_{n=1}^{\infty} \tau(n) e^{2 \pi i n z}$ be the Ramanujan cusp form of weight $12$. 
Around four decades ago, Don Zagier \cite[p.~417]{Zag}, \cite[p.~271]{Zag92} conjectured that the constant term of the automorphic form $\Im(z)^{12}| \Delta(z)|^2$, that is, the Lambert series $$ a_{0}(y):= y^{12}\sum_{n=1}^\infty \tau(n)^2 \exp(-4 \pi n y) $$ has an asymptotic expansion as $y\rightarrow 0^{+}$, and it can be written in terms of the non-trivial zeros of the Riemann zeta function $\zeta(s)$.  He also claimed that $a_{0}(y)$ will have an oscillatory behaviour when $y \rightarrow 0^{+}$. 
Moreover,  his prediction was that,  $a_{0}(y)$ will satisfy
\begin{align*}
a_0(y) \sim A + \sum_{\rho} y^{ 1 - \frac{\rho}{2}} B_{\rho}\quad \textrm{as}\,\,  y \rightarrow 0^{+}, 
\end{align*}
where $A$ is some constant and the sum over $\rho$ runs through all non-trivial zeros of $\zeta(s)$,  and $B_\rho$ are some complex numbers.  Under the assumption of the Riemann Hypothesis,  one can write the above asymptotic expansion as
\begin{align*}
a_0(y) \sim A + y^{3/4} \sum_{n=1}^\infty a_n \cos\left( \phi_n - \frac{t_n}{2} \log(y) \right) \quad \textrm{as}\,\,  y \rightarrow 0^{+}, 
\end{align*}
where $a_n$ and $\phi_n$ are some constants.  The oscillatory behaviour of $a_0(y)$ is due to the presence of cosine functions in the above asymptotic expansion.
This conjecture was finally settled by Hafner and Stople \cite{HS} in 2000.  In 2017, Chakraborty, Kanemitsu, and the second author  \cite{CKM}  extended this phenomenon for Hecke eigenform over the full modular group. Again, Chakraborty et al.  \cite{CJKM} also extended it for congruence subgroup. Mainly, they proved that the constant terms of the automorphic form $y^k |f(z)|^2$, where $f(z)$ is a Hecke eigenform of weight $k$ over the full modular group $SL(2,\mathbb{Z})$, that is, $$y^k \sum_{n=1}^{\infty} |\lambda_f(n)|^2 \exp(-4 \pi n y)$$ also has an asymptotic expansion as $y \rightarrow 0^{+}$, and it can be expressed in terms of the non-trivial zeros of the Riemann zeta function.  Recently,  Banerjee and Chakraborty \cite{BC-19} studied this phenomenon for the Maass cusp forms. 

In the present paper, we investigate an asymptotic expansion of the Lambert series 
$$ y^k \sum_{n=1}^{\infty} \lambda_f(n^2) \exp(-n y) $$ as $y \rightarrow 0^{+}$.
Interestingly,  we observe that the asymptotic expansion of this Lambert series can also be written in terms of the non-trivial zeros of the Riemann zeta function $\zeta(s)$.

\section{Preliminaries}
Let $k$ and $N$ be two positive integers and $\chi$ be a primitive Dirichlet character modulo $N$. We define
\begin{align*}
\Gamma_{0}(N):= \left\{  \left(\begin{array}{c c}  a & b \\
c & d  \end{array} \right) \in {\rm SL}_2(\mathbb{Z}) :    { c} \equiv 0 \pmod N  \right\}.
\end{align*}
We denote $ S_k( \Gamma_{0}(N), \chi)$ be the space of cusp forms of weight $k$, level $N$, and Nebentypus character $\chi$. Let $f(z) \in  S_k( \Gamma_{0}(N), \chi)$ be a normalized Hecke eigenform with the Fourier series expansion 
\begin{align}\label{Fourier}
f(z) = \sum_{n=1}^\infty \lambda_{f}(n) \exp(2 \pi i n z), \quad \forall z \in \mathbb{H}.
\end{align}
The $L$-function associated to $f(z)$ satisfy following Euler product representation:
\begin{align*}
L(s, f):= \sum_{n=1}^{\infty} \frac{\lambda_{f}(n)}{n^s} & =   \prod_{p:\, {\rm prime} } \left( 1 - \lambda_{f}(p) p^{-s} + \chi(p) p^{k-1-2s} \right)^{-1} \\
  & = \prod_{p:\, {\rm prime} } ( 1 - \alpha_{p} p^{-s})^{-1} (1 - \beta_{p} p^{-s})^{-1}, \quad \Re(s) > \frac{k+1}{2},
\end{align*}
where the complex conjugates $\alpha_p$ and $\beta_p$ satisfy the relations $\alpha_p+\beta_p=\lambda_f(p)$ and $\alpha_p \beta_p =\chi(p) p^{k-1}$. With the help of these complex numbers, Shimura \cite[Equation (0.2)]{Shm} introduced a new $L$-function associated to a Hecke eigenform $f(z)$, namely symmetric square $L$-function, which is defined by
\begin{align*}
L(s, {\rm Sym}^2(f) \otimes \psi):= \prod_{p:\, {\rm prime} } \left( 1 - \psi(p)\alpha_{p}^2 p^{-s}  \right)^{-1}\left( 1 - \psi(p)\beta_{p}^2 p^{-s}  \right)^{-1} \left( 1 - \psi(p)\alpha_{p} \beta_{p} p^{-s}  \right)^{-1},
\end{align*}
where $\psi$ is a primitive Dirichlet character modulo $M$. This is one of the important examples of an $L$-function associated to a ${\rm GL}(3)$-automorphic form and its analytic continuation and functional equation has been studied by Shimura. More generally,  we can define symmetric power $L$-function associated to $f(z)$ as follows:
\begin{align*}
L(s, {\rm Sym}^n(f) \otimes \psi) :=  \prod_{p:\, {\rm prime} } \prod_{i=0}^{n} \left( 1 - \psi(p) \alpha_{p}^i \beta_{p}^{n-i} \right)^{-1}. 
\end{align*}
Interested readers can see Murty's \cite{Murty} lecture notes for more information on symmetric power $L$-function.
Upon simplification of the Euler product of the symmetric square $L$-function, Shimura observed that 
\begin{align}\label{symmetric square_relation}
L(s, {\rm Sym}^2(f) \otimes \psi) = L(2s - 2 k + 2, \chi^2 \psi^2) \sum_{n=1}^{\infty} \frac{\lambda_{f}(n^2) \psi(n)}{n^{s}},
\end{align}
where $L(s,\chi)$ is the usual Dirichlet $L$-function.
In the same paper, Shimura established following important result:
\begin{theorem}
Let us	define 
	\begin{align*}
		L^{*}(s, {\rm Sym}^2(f) \otimes \psi):=N^s \pi^{-\frac{3s}{2}} \Gamma\left(\frac{s}{2}\right) \Gamma\left(\frac{s+1}{2} \right) \Gamma\left( \frac{s-k+2-\lambda_{0}}{2}  \right) L(s, {\rm Sym}^2(f) \otimes \psi),
	\end{align*}
	where $\lambda_0 = \begin{cases}
		0, &  \rm{if} \,\, \chi\psi(-1)=1,\\
		1. &  \rm{if}\,\, \chi\psi(-1)=-1.
		
	\end{cases}$
	 
Then $L^{*}(s, {\rm Sym}^2(f) \otimes \psi)$ can be analytically continued to the complex plane except for simple poles at $s=k$ and at $s=k-1$. 
\end{theorem}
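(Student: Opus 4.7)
The plan is to realise $L(s, \mathrm{Sym}^2(f) \otimes \psi)$ via a Rankin--Selberg convolution of $f$ with its conjugate against a twisted non-holomorphic Eisenstein series, and to transfer the known analytic properties of that Eisenstein series to the symmetric square $L$-function. To this end, first I would introduce the level-$N$ Eisenstein series
\begin{align*}
E(z, s, \eta) \;=\; \sum_{\gamma \in \Gamma_\infty \backslash \Gamma_0(N)} \overline{\eta}(\gamma)\, (\Im \gamma z)^s,
\end{align*}
attached to an auxiliary character $\eta$ built from $\chi$ and $\psi$. It is classical that $E(z, s, \eta)$ admits meromorphic continuation to all of $s \in \mathbb{C}$, satisfies a functional equation $s \mapsto 1-s$, and has at worst a simple pole at $s=1$ (arising only when $\eta$ becomes principal), with explicit constant residue.

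Next I would form the Rankin--Selberg integral
\begin{align*}
I(s) \;=\; \int_{\Gamma_0(N) \backslash \mathbb{H}} |f(z)|^2\, y^{k}\, E(z, s, \eta)\, \frac{dx\, dy}{y^2},
\end{align*}
and unfold it against the Eisenstein series. The unfolding collapses the integration to the strip $\{0 \le x < 1,\, y > 0\}$; inserting the Fourier expansion \eqref{Fourier} of $f$ and integrating in $x$ isolates the diagonal Fourier coefficients, after which the $y$-integral is a Mellin transform of an exponential producing a gamma factor. The net output has the shape
\begin{align*}
I(s) \;=\; \frac{\Gamma(s+k-1)}{(4\pi)^{s+k-1}} \sum_{n=1}^{\infty} \frac{\eta(n)\, |\lambda_f(n)|^2}{n^{\,s+k-1}}.
\end{align*}

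The heart of the argument is to recognise the resulting convolution Dirichlet series in terms of the symmetric square. Using the Hecke multiplicativity identity $\lambda_f(n)^2 = \sum_{d \mid n} \chi(d)\, d^{k-1}\, \lambda_f(n^2/d^2)$ together with $\overline{\lambda_f(n)} = \overline{\chi}(n)\lambda_f(n)$ and the Shimura relation \eqref{symmetric square_relation}, one can, with the choice $\eta = \overline{\chi}\psi$, rewrite the inner series as the quotient of $L(s+k-1, \mathrm{Sym}^2(f) \otimes \psi)$ by a Dirichlet $L$-factor of the form $L(2s, \chi^2\psi^2)$, modulo finitely many entire, non-vanishing local Euler factors at primes dividing $NM$. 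Solving for the symmetric square and repackaging the gamma integral together with the archimedean data of the Eisenstein series exhibits $L^{*}(s, \mathrm{Sym}^2(f) \otimes \psi)$ as a product of an entire non-vanishing factor, the Dirichlet $L$-function $L(2s - 2k + 2, \chi^2\psi^2)$, and a meromorphic translate of $I$. The pole of the Eisenstein series at $s=1$ then translates to the simple pole of $L^{*}$ at $s=k$, while the pole of the Dirichlet $L$-factor yields, after a short residue computation, the second simple pole at $s=k-1$.

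The main obstacle will be the archimedean bookkeeping: one must verify that the gamma integral produced by unfolding matches exactly the prescribed triple product $\Gamma(s/2)\, \Gamma((s+1)/2)\, \Gamma((s-k+2-\lambda_0)/2)$ in the definition of $L^{*}$. This is precisely the point at which the parity parameter $\lambda_0$ enters the picture, since depending on whether $\chi\psi(-1) = +1$ or $-1$ one has to build the Eisenstein series in the first step from a weight-zero or a weight-one automorphic form in order to generate the correct shift inside the third gamma factor. A parallel, more combinatorial, task is the explicit identification of the bad Euler factors at primes dividing $NM$ and the verification that they are entire and non-vanishing in a right half-plane, so that no extraneous poles or zeros can intrude. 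Once both checks are carried out, no singularities other than the simple poles at $s = k$ and $s = k-1$ survive, and the meromorphic continuation is complete.
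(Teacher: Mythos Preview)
The paper does not prove this theorem at all: it is stated in the Preliminaries section as a result of Shimura and is simply cited from \cite{Shm}, with no argument given. So there is no in-paper proof for your proposal to be compared against.

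That said, your plan is essentially Shimura's own argument: realise the symmetric square $L$-function through the Rankin--Selberg unfolding of $|f|^2 y^{k}$ against a twisted real-analytic Eisenstein series, and import the meromorphic continuation and pole structure from the Eisenstein side. The strategic outline (unfolding, Hecke identity $\lambda_f(n)^2 = \sum_{d\mid n}\chi(d)d^{k-1}\lambda_f(n^2/d^2)$, relation \eqref{symmetric square_relation}, parity parameter $\lambda_0$ entering via the weight of the Eisenstein series) is correct.

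One point to tighten: your attribution of the pole at $s=k-1$ to ``the pole of the Dirichlet $L$-factor'' is off. The factor $L(2s-2k+2,\chi^2\psi^2)$, when it is $\zeta$, has its pole at $s=k-\tfrac{1}{2}$, not $k-1$, and in any case that factor is not part of $L^{*}$ as defined in the statement. In Shimura's argument both simple poles at $s=k$ and $s=k-1$ arise from the \emph{completed} Eisenstein series, which (with its own $L$- and gamma-factors attached) has simple poles at $s=1$ and $s=0$; the shift $s\mapsto s+k-1$ in the unfolding then places them at $k$ and $k-1$. You will want to rewrite that step so that the second pole is traced back to the $s=0$ pole of the completed Eisenstein series (equivalently, to the functional equation $s\leftrightarrow 1-s$), rather than to a Dirichlet $L$-factor.
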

Rankin \cite{Ran1} and Selberg \cite{Selberg} independently studied following interesting Dirichlet series associated to the cusp form $f(z)$, namely,
\begin{align*}
RS(s, f \otimes \bar{f}) := \sum_{n=1}^{\infty} \lambda^2_{f}(n) n^{-s}, \quad \Re(s)>k.
\end{align*}
This Dirichlet series is known as Rankin-Selberg $L$-function associated to $f(z)$. For a general construction of the Rankin-Selberg $L$-function,  readers can see the paper of Winnie Li \cite{Li}. The Rankin-Selberg $L$-function and the symmetric square $L$-function  are intimately connected with each other. This connection was established by Shimura, mainly, he observed that the following relation holds: 
\begin{align*}
L(s, {\rm Sym}^2(f) \otimes \psi) L(s-k+1, \chi \psi) = RS(s, f \otimes \bar{f}\otimes\psi) L(2s-2k+2,\chi^2\psi^2).
\end{align*}
For simplicity,  now onwards we assume $\chi$ and $\psi$ both are trivial characters. 
Thus,  the above relation becomes
\begin{align*}
L(s, {\rm Sym}^2(f)) \zeta(s-k+1) = RS(s, f \otimes \bar{f}) \zeta(2s-2k+2). 
\end{align*}
Since $\chi$ and $\psi$ both are trivial,  one can see that $\lambda_0=0$. In this case,
Shimura showed that the completed symmetric square $L$-function $L^{*}(s, {\rm Sym}^2(f))$ is entire and satisfy following beautiful functional equation:
\begin{align}\label{functional_symmetric square}
L^{*}(s, {\rm Sym}^2(f)) = L^{*}(2k-1-s, {\rm Sym}^2(f)).
\end{align}
This functional equation will be one of the crucial ingredients to obtain our main result.   The normalized version of the above functional equation can be found in \cite{IM}.

Now we introduce well-known hypergeometric series.
Let $a_1, a_2, \cdots, a_p$ and $b_1, b_2, \cdots, b_q$ be $p+q$ complex numbers. We define the generalized hypergeometric series  by
\begin{align}\label{hyper}
{}_pF_q \left( a_1, \cdots, a_p;\, b_1, \cdots, b_q;\, z \right):= \sum_{n=0}^{\infty} \frac{(a_1)_n \cdots (a_p)_n}{(b_1)_n\cdots (b_q)_n} \frac{z^n}{n!},
\end{align}
where $(a)_n:=\frac{\Gamma(a+n)}{\Gamma(a)}$ and $b_i$'s are not allowed to take non-positive integers.  This series converges for all complex values of $z$ if $p \leq q$, and when $p=q+1$ it converges for $|z|<1$, and in the latter case, Euler showed that it can be analytically continued to the entire complex plane except the branch cut from $1$ to $+\infty$. 



We define an arithmetic function $B_f(n)$,  connected with the symmetric square $L$-function by the relation:
\begin{align}\label{B_f(n)}
 B_f(n) := ( a_{{\rm Sym}^2(f)}*b)(n), \, \, {\rm where}\,\, b(n)= \begin{cases}  m^{2k-1}, &  \quad {\rm if }\quad n=m^2, \\
0, & \quad {\rm otherwise}.
  \end{cases}
  \end{align}
One can show that the Dirichlet series associated to $B_f(n)$ is absolutely convergent for $\Re(s)>k$. 
Now we are ready to state the main theorem.

\section{Main Results}

\begin{theorem}\label{main theorem}
Let $f(z) \in S_{k}(\mathrm{SL}_2(\mathbb{Z}))$ be a Hecke eigenform with the Fourier series expansion \eqref{Fourier}.  Assume all the non-trivial zeros of $\zeta(s)$ are simple.  Then for any positive real number $y$,  we have
\begin{align*}
 \sum_{n=1}^\infty  \lambda_f(n^2) \exp(-n y) = &  \frac{\Gamma(k)y^{1-k}}{2  \pi^2} \sum_{n=1}^{\infty}   \frac{B_f(n) }{n^k}
 \left[    {}_3F_2 \left( \frac{k}{2}, \frac{k+1}{2}, 1; \frac{1}{4}, \frac{3}{4}; -   \left( \frac{ y}{8 n \pi}  \right)^{2}    \right)- 1 \right] \nonumber \\
 & + \mathcal{R}(y),  
\end{align*}
where 
\begin{align*}
 \mathcal{R}(y) =  \frac{1}{2 y^{k-1} } \sum_{\rho} \frac{ \Gamma \left( \frac{\rho}{2} + k-1  \right) L(  \frac{\rho}{2} + k-1, {\rm Sym}^2(f))}{ y^{\frac{\rho}{2}} \zeta'(\rho) },
\end{align*}
and the sum over $\rho$ runs through all the non-trivial zeros of $\zeta(s)$ involves bracketing the terms so that the terms corresponding to $\rho_1$ and $\rho_2$ are included in the same bracket if they satisfy
	\begin{align*}
|\Im(\rho_1) - \Im(\rho_2)| < \exp \left( -C \frac{\Im(\rho_1)}{\log(\Im(\rho_1) )} \right) + \exp \left( -C \frac{\Im(\rho_2)}{\log(\Im(\rho_2) )} \right),
\end{align*}
where $C$ is some positive constant.

\end{theorem}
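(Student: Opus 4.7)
The plan is to prove Theorem \ref{main theorem} by the Mellin–contour method familiar from the Hafner–Stopple argument. First, I would write the Lambert series as an inverse Mellin transform, using $e^{-ny} = \frac{1}{2\pi i}\int_{(c)} \Gamma(s)(ny)^{-s}\,ds$ for $c>0$ together with the Shimura-type identity \eqref{symmetric square_relation} specialised to trivial characters, to obtain
\begin{equation*}
\sum_{n=1}^\infty \lambda_f(n^2)\, e^{-ny} \;=\; \frac{1}{2\pi i}\int_{(c)} \Gamma(s)\, y^{-s}\, \frac{L(s,{\rm Sym}^2(f))}{\zeta(2s-2k+2)}\,ds,
\end{equation*}
valid for $c$ large enough to guarantee absolute convergence of $\sum_n \lambda_f(n^2)/n^s$ and of the resulting integral (via Stirling on $\Gamma(s)$).

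Next I would shift the line of integration leftward, past the strip $k-1 < \Re(s) < k-\frac{1}{2}$ into which the image $2s-2k+2$ of the critical strip of $\zeta$ is mapped. Because $L(s,{\rm Sym}^2(f))$ is entire under our hypothesis, the only singularities crossed are the simple poles at $s = \rho/2 + k - 1$ produced by the (assumed simple) non-trivial zeros $\rho$ of $\zeta$. A direct residue computation, using $\zeta(2s-2k+2)=2\zeta'(\rho)(s-\rho/2-k+1)+O((s-\rho/2-k+1)^2)$, produces precisely the summand of $\mathcal{R}(y)$. The conditional convergence of $\sum_{\rho}$ forces the bracketing in the statement: horizontal cross-cuts $\Im(s)=T$ are chosen in the gaps between consecutive zeros of $\zeta$ (where the classical bound $1/\zeta(\sigma+iT)\ll \log T$ is available), and the grouping condition of the theorem is then the standard consequence of known zero-gap and zero-density estimates for $\zeta(s)$.

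The residual integral on a vertical line with $\Re(s)<k-1$ must now produce the main term. Here I would invoke the functional equation \eqref{functional_symmetric square} to replace $L(s,{\rm Sym}^2(f))$ by gamma factors times $L(2k-1-s,{\rm Sym}^2(f))$, and then change variables $s\mapsto 2k-1-s$ so that the new $L$-factor sits at large real part. Expanding it as a Dirichlet series and using $\sum_n B_f(n)/n^s = L(s,{\rm Sym}^2(f))\,\zeta(2s-2k+1)$, which follows directly from \eqref{B_f(n)}, one interchanges sum and integral. What remains for each $n$ is a Mellin–Barnes integral whose integrand is a ratio of six gamma factors times $(y/n)^{-s}$; applying Legendre's duplication formula to collapse $\Gamma(s/2)\,\Gamma((s+1)/2)$ and the reflection $\Gamma(s)\Gamma(1-s)=\pi/\sin\pi s$ to simplify the remaining factors converts this integrand into exactly the Barnes representation of ${}_3F_2(k/2,(k+1)/2,1;1/4,3/4;-(y/8n\pi)^2)$. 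The ``$-1$'' in the bracket corresponds to subtracting the $z^0$ term of the hypergeometric series; this term is cancelled against a contribution already separated off (from the pole of $\Gamma(s)$ at $s=0$, or equivalently from the residue at $s=k-1$ under the dual variable), which is what also makes the outer sum $\sum_n B_f(n)/n^k$ converge even though $\sum_n B_f(n)/n^s$ itself only converges for $\Re(s)>k$.

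The main obstacle will be the bracketed $\sum_{\rho}$: justifying the residue sum in the stated bracketed sense demands careful control of $1/\zeta'(\rho)$, convexity-type bounds for $L(\rho/2+k-1,{\rm Sym}^2(f))$ on vertical lines, and Stirling bounds for $\Gamma$, combined with a judicious choice of horizontal truncations $T_j\to\infty$ lying in zero-free gaps. A secondary but nontrivial calculational hurdle is the clean identification of the Mellin–Barnes integral with the precise ${}_3F_2$ stated: the six gamma factors arising from the functional equation must be reduced by duplication and reflection without a bookkeeping error in order to produce exactly the parameters $(k/2,(k+1)/2,1;1/4,3/4)$ and the argument $-(y/8n\pi)^2$ appearing in the theorem.
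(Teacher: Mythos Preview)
Your proposal is correct and follows essentially the same route as the paper: inverse Mellin transform, contour shift past the image of the critical strip (the paper lands on $k-2<d<k-1$), the functional equation of ${\rm Sym}^2 f$ together with that of $\zeta$ (which you should make explicit, since it is what converts the denominator $\zeta(2s-2k+2)$ into gamma factors times $\zeta(2w-2k+1)$ and allows the $B_f$--expansion), the change $w=2k-1-s$, Dirichlet expansion in $B_f(n)$, and a Mellin--Barnes identification---packaged in the paper as a Meijer $G^{1,3}_{3,3}$ followed by Slater's theorem. One small correction: the ``$-1$'' does not come from the pole of the original $\Gamma(s)$ at $s=0$ (the contour never descends that far); it arises, exactly as in your parenthetical ``residue at $s=k-1$ under the dual variable'', from a secondary contour shift in each summand past the simple pole of $\Gamma(k-w)$ at $w=k$, performed after the Dirichlet expansion.
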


The below asymptotic result is an immediate application of the this theorem. 

\begin{corollary}\label{Cor1}
Let $N$ be a positive integer and $f(z)$ be a normalized Hecke eigenform defined as in Theorem \ref{main theorem}.  Assume Riemann Hypothesis and simplicity of the non-trivial zeros of $\zeta(s)$.  For $y \rightarrow0^{+}$,  we have
\begin{align*}
y^k  \sum_{n=1}^\infty  \lambda_f(n^2) \exp(-n y) = y^{3/4}  \sum_{n=1}^\infty  b_n \cos\left( \delta_n - \frac{t_n}{2} \log(y) \right) +  \sum_{j=1}^{N-1} A_j y^{2 j + 1} + O_{f, k}(y^{2N+1}),
\end{align*}
where the absolute constants $A_j$ depend only on $f$ and the polar representation of $\Gamma \left( \frac{\rho_n}{2} + k-1  \right) L(  \frac{\rho_n}{2} + k-1, {\rm Sym}^2(f)) \left(  \zeta'(\rho_n) \right)^{-1}$ is considered as $b_n \exp( i \delta_n)$,  where $\rho_n= \frac{1}{2} + i t_n$ denotes the nth non-trivial zero of $\zeta(s)$.  
\end{corollary}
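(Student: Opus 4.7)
The plan is to deduce the corollary from Theorem \ref{main theorem} by multiplying both sides by $y^k$ and analyzing the two resulting pieces separately. The hypergeometric main term will contribute the polynomial part $\sum_{j=1}^{N-1} A_j y^{2j+1}$ together with the error $O_{f,k}(y^{2N+1})$, while the zero sum $y^k\mathcal{R}(y)$ will produce the oscillatory expansion $y^{3/4}\sum_n b_n \cos(\delta_n - (t_n/2)\log y)$ under the Riemann Hypothesis.

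For the hypergeometric piece, I would expand ${}_3F_2$ via the series \eqref{hyper}; writing $z_n := -(y/(8n\pi))^2$ and using $(1)_m/m! = 1$, I obtain a convergent power series in $y^2$. Since the Dirichlet series of $B_f(n)$ converges absolutely for $\Re(s)>k$, the interchange of the sums over $n$ and $m$ is legitimate for every $m\geq 1$, producing
\begin{equation*}
\frac{\Gamma(k)}{2\pi^2}\sum_{m=1}^\infty \frac{(-1)^m (k/2)_m\, ((k+1)/2)_m}{(1/4)_m\,(3/4)_m\,(64\pi^2)^m}\left(\sum_{n=1}^\infty \frac{B_f(n)}{n^{k+2m}}\right)y^{2m+1},
\end{equation*}
and one sets $A_j$ to be the $j$th coefficient of this series. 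Truncating at $m=N-1$ and bounding the tail geometrically (using $(a)_m/(b)_m = O_{a,b}(m^{a-b})$ together with the absolute convergence of the inner Dirichlet series) yields the polynomial part and the error $O_{f,k}(y^{2N+1})$.

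For the zero sum, under RH I would write $\rho_n = \tfrac{1}{2}+it_n$, so that $y^{-\rho_n/2} = y^{-1/4}\exp(-(it_n/2)\log y)$; the prefactor $y^k/(2 y^{k-1}) = y/2$ combines with $y^{-1/4}$ to give $y^{3/4}/2$. Pairing each non-trivial zero $\rho_n$ with its conjugate $\overline{\rho_n}$ and using that $\Gamma$, $\zeta'$, and $L(s,{\rm Sym}^2(f))$ all satisfy $F(\bar s) = \overline{F(s)}$ (the last because $L(s,{\rm Sym}^2(f))$ has real Dirichlet coefficients, as visible from \eqref{symmetric square_relation}), each conjugate pair contributes $2b_n\cos(\delta_n - (t_n/2)\log y)$ with $b_n e^{i\delta_n}$ as in the statement, yielding the claimed oscillatory sum after combining with the $y^{3/4}/2$ prefactor.

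The main obstacle is justifying this rearrangement: the sum over $\rho$ in Theorem \ref{main theorem} is only conditionally convergent with the bracketing prescribed there, and since $|\Im(\rho_n)-\Im(\overline{\rho_n})| = 2|t_n|$ dwarfs the bracketing tolerance $\exp(-Ct_n/\log t_n)$, the pair $\{\rho_n,\overline{\rho_n}\}$ never falls inside a single bracket. I would therefore establish absolute convergence of $\sum_n b_n$ under RH by combining standard convexity bounds for $L(\sigma+it,{\rm Sym}^2(f))$, Stirling's estimate for $\Gamma$, and bounds for $1/\zeta'(\rho)$ available under the simplicity hypothesis together with a zero-density estimate for $\zeta$; once absolute convergence is in hand, the rearrangement into conjugate pairs, and hence into the cosine series, is immediate. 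Checking these absolute-convergence bounds rigorously is where both the Riemann Hypothesis and the simplicity assumption on the zeros of $\zeta(s)$ are essentially used.
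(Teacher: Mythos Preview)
Your approach mirrors the paper's exactly: the paper also Taylor-expands ${}_3F_2-1$ to extract the polynomial piece plus $O(y^{2N+1})$, then pairs conjugate zeros in $\mathcal{R}(y)$ to obtain the cosine sum. You are in fact more scrupulous than the paper in flagging the rearrangement issue, which the paper passes over in silence.

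That said, your proposed resolution --- proving \emph{absolute} convergence of $\sum_n b_n$ --- would require a pointwise upper bound on $|\zeta'(\rho_n)|^{-1}$ of sub-exponential type (something like $|\zeta'(\rho_n)|^{-1}\ll e^{ct_n}$ with $c<\pi/4$), and no such bound is known even under RH together with simplicity; this is in the territory of the conjectures of Gonek on $1/\zeta'(\rho)$, and if it were available the bracketing in Theorem~\ref{main theorem} would itself be redundant. The cleaner fix, implicit in the paper's manipulation, is to observe that the bracketing is symmetric under $\rho\mapsto\bar\rho$ (the zeros come in conjugate pairs and the bracket condition depends only on proximity of imaginary parts), so one may pair each bracket with its conjugate bracket without disturbing the conditional convergence, and thereby obtain the cosine form with the same bracketing understood in the resulting series over $n$.
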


\section{Some Well-known Results}
In this section,  we mention a few important well-known results that we will use frequently.  In his seminal paper, Riemann showed that $\zeta(s)$ can be analytically continued to the whole complex plane except for a simple pole at $s=1$ and satisfy following functional equation:
\begin{align}\label{func_zeta}
\pi^{-s/2}\Gamma\left( \frac{s}{2} \right) \zeta(s) = \pi^{-(1-s)/2} \Gamma\left( \frac{1-s}{2} \right) \zeta(1-s).
\end{align}

%
%
%
The gamma function $\Gamma(s)$ satisfy following duplication formula: 
\begin{lemma}
For any complex number $s$, we have 
\begin{equation}\label{duplication}
\Gamma(2s)=\frac{\Gamma(s)\Gamma(s+\frac{1}{2})\, 2^{2s}}{2\sqrt{\pi}}.
\end{equation}
\end{lemma}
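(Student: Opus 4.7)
The plan is to obtain \eqref{duplication} from the classical Beta integral and its evaluation in terms of Gamma functions. First I would establish the identity in the half-plane $\Re(s)>0$ so that every integral below converges absolutely, and then extend to the full complex plane by analytic continuation, observing that both sides of \eqref{duplication} are meromorphic with the same pole set (simple poles at $s=0,-\tfrac12,-1,-\tfrac32,\ldots$).

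The first step is to recall the Eulerian representation
\begin{equation*}
B(x,y)=\int_0^1 t^{x-1}(1-t)^{y-1}\,dt=\frac{\Gamma(x)\Gamma(y)}{\Gamma(x+y)},\qquad \Re(x),\Re(y)>0,
\end{equation*}
and to specialize to $x=y=s$ to get $B(s,s)=\Gamma(s)^2/\Gamma(2s)$. The integrand $[t(1-t)]^{s-1}$ is symmetric about $t=1/2$, so the substitution $t=(1-\cos\theta)/2$ — which sends $[0,\pi]$ onto $[0,1]$ with $t(1-t)=\sin^2\theta/4$ and $dt=(1/2)\sin\theta\,d\theta$ — transforms the integral into a half-integer power of $\sin\theta$. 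A short computation gives
\begin{equation*}
B(s,s)=\frac{1}{2\cdot 4^{s-1}}\int_0^{\pi}\sin^{2s-1}\theta\,d\theta.
\end{equation*}
Folding $[0,\pi]$ about $\pi/2$ and invoking the standard trigonometric form of the Beta function,
$\int_0^{\pi/2}\sin^{2s-1}\theta\,d\theta=\tfrac12 B(s,\tfrac12)$, collapses this to $B(s,s)=2^{1-2s}B(s,\tfrac12)$.

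The second step is purely algebraic: inserting $B(s,s)=\Gamma(s)^2/\Gamma(2s)$ on the left, $B(s,\tfrac12)=\Gamma(s)\Gamma(\tfrac12)/\Gamma(s+\tfrac12)$ on the right, and using $\Gamma(\tfrac12)=\sqrt{\pi}$ yields
\begin{equation*}
\frac{\Gamma(s)^2}{\Gamma(2s)}=2^{1-2s}\sqrt{\pi}\,\frac{\Gamma(s)}{\Gamma(s+\tfrac12)}.
\end{equation*}
Cancelling a factor of $\Gamma(s)$ and solving for $\Gamma(2s)$ produces exactly \eqref{duplication}, first in $\Re(s)>0$ and then, by meromorphic continuation, for all complex $s$.

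I do not anticipate a genuine obstacle: every step is classical. The only point that requires care is the trigonometric substitution, which must be performed so that the exponent $2s-1$ emerges (this is what forces the choice $t(1-t)=\sin^2\theta/4$ rather than $\sin^2\theta$); once that is in place the identification with $B(s,\tfrac12)$ is forced, and the algebraic reshuffle is immediate.
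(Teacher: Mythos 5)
Your derivation is correct: the Beta-integral computation, the trigonometric substitution giving $B(s,s)=2^{1-2s}B(s,\tfrac12)$, and the final algebra all check out, and the extension by meromorphic continuation (with matching pole sets at $s=0,-\tfrac12,-1,\ldots$) is handled properly. The paper offers no proof of this lemma at all — it simply records Legendre's duplication formula as a classical fact — so there is nothing to compare against; your argument is the standard textbook derivation and is complete.
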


The next result, gives an important asymptotic expansion for the gamma function,  known as Stirling's formula. 
\begin{lemma} \label{Starling}
In a vertical strip $c \leq \sigma \leq d$,  
\begin{equation}\label{starling1}
|\Gamma(\sigma + i T) | = \sqrt{2\pi} |T|^{\sigma- \frac{1}{2}} e^{-\frac{1}{2} \pi |T|} \left(1 + O\left(\frac{1}{|T|}\right)  \right), \quad {\rm as} \quad |T|\rightarrow \infty.
\end{equation}
\end{lemma}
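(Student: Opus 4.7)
The plan is to derive the claimed asymptotic for $|\Gamma(\sigma+iT)|$ from the classical logarithmic Stirling expansion, which states that
\begin{equation*}
\log\Gamma(s) = \left(s-\tfrac{1}{2}\right)\log s - s + \tfrac{1}{2}\log(2\pi) + O\!\left(\tfrac{1}{|s|}\right)
\end{equation*}
uniformly in any sector $|\arg s|\le \pi-\delta$ as $|s|\to\infty$. Since $c\le\sigma\le d$ and $|T|\to\infty$, the point $s=\sigma+iT$ lies in such a sector (with $|\arg s|\to \pi/2$), so the above asymptotic applies and there is no loss of uniformity in $\sigma$. By the reflection relation $\Gamma(\bar s)=\overline{\Gamma(s)}$, it suffices to treat $T>0$; the formula for $T<0$ will then follow by complex conjugation, which is why $|T|$ appears in the final statement.

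Next I would carefully expand $\log s$ for $s=\sigma+iT$, $T>0$ large. Writing $|s|=|T|\sqrt{1+\sigma^{2}/T^{2}}$ and $\arg s = \tfrac{\pi}{2} - \arctan(\sigma/T)$, a Taylor expansion gives
\begin{equation*}
\log s = \log|T| + \tfrac{i\pi}{2} - \tfrac{i\sigma}{T} + O\!\left(\tfrac{1}{T^{2}}\right).
\end{equation*}
Multiplying by $s-\tfrac12=(\sigma-\tfrac12)+iT$ and expanding, the cross term $(iT)(-i\sigma/T)=\sigma$ cancels the $-s$ contribution in the real part, and one obtains
\begin{equation*}
\Re\bigl[(s-\tfrac12)\log s - s\bigr] = \bigl(\sigma-\tfrac12\bigr)\log|T| - \tfrac{\pi |T|}{2} + O\!\left(\tfrac{1}{|T|}\right).
\end{equation*}
Taking real parts of the Stirling expansion then yields
\begin{equation*}
\log|\Gamma(\sigma+iT)| = \bigl(\sigma-\tfrac12\bigr)\log|T| - \tfrac{\pi|T|}{2} + \tfrac12\log(2\pi) + O\!\left(\tfrac{1}{|T|}\right).
\end{equation*}

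Finally, exponentiating this identity and using $e^{O(1/|T|)} = 1 + O(1/|T|)$ gives exactly \eqref{starling1}. The main technical care is in the second step: one must track that the expansion of $\arctan(\sigma/T)$ produces the $-i\sigma/T$ term whose imaginary contribution cancels the real part of $s$ in $(s-\tfrac12)\log s - s$, so that no spurious $O(1)$ piece is left in $\log|\Gamma|$. Everything else is routine bookkeeping, and the uniformity in the strip $c\le\sigma\le d$ is automatic because the error constants in the sectorial Stirling expansion depend only on the opening angle of the sector, which is fixed here.
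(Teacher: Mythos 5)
Your derivation is correct: the expansion of $\log s$ for $s=\sigma+iT$, the cancellation of the $+\sigma$ cross term against $-\Re(s)$, the exponentiation, and the reduction to $T>0$ via $\Gamma(\bar{s})=\overline{\Gamma(s)}$ are all accurate, and uniformity in $\sigma$ over the compact strip is handled properly. The paper itself offers no proof of this lemma --- it is stated in the ``well-known results'' section as the standard Stirling asymptotic --- and what you have written is precisely the standard textbook derivation from the logarithmic Stirling series, so there is nothing to reconcile.
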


\begin{lemma}\label{bound_zeta_inverse}
Let us suppose that there exist a sequence $T$ with arbitrary large absolute value that satisfy $\exp\left(-C_1 \Im(\rho)/\log(\Im(\rho) )   \right)< |T - \Im(\rho)| $ for every non-trivial zeros of $\zeta(s)$,  where $C_1$ is some positive constant.  Then 
\begin{align*}
\frac{1}{| \zeta(\rho + i T) |} < e^{C_2 T},
\end{align*}
where $0<C_2<\pi/4$ is some suitable  constant.
\end{lemma}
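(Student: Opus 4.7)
My plan is to reduce the estimate to the critical strip and there combine the Hadamard--product formula for $\zeta'/\zeta$ with the clearance hypothesis on $T$. Reading the variable in $\zeta(\rho+iT)$ as a generic complex point $s=\sigma+iT$ (the use of $\rho$ here appears to be a clash of notation with the zero variable), I first dispose of $\sigma\ge 2$ via the Euler product, which gives $|\zeta(s)|\ge 1/\zeta(2)$, and $\sigma\le -1$ via the functional equation \eqref{func_zeta} together with Stirling's formula \eqref{starling1}: these express $|\zeta(s)|$ as a polynomial-in-$|T|$ multiple of $|\zeta(1-s)|$, which is again Euler-bounded below. Both give much more than $e^{C_2T}$, so the real work lies in the strip $-1<\sigma<2$, where the non-trivial zeros live.

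In this strip I would invoke the classical formula obtained from the logarithmic derivative of the Hadamard product for $\xi(s)$ (see e.g.\ Titchmarsh, \S 9.6):
\begin{align*}
\frac{\zeta'(s)}{\zeta(s)}=\sum_{|\Im(\rho')-T|\le 1}\frac{1}{s-\rho'}+O(\log T), \qquad -1\le\sigma\le 2,
\end{align*}
and integrate along the horizontal segment from $s=2+iT$, where $|\log\zeta(2+iT)|=O(1)$ by the Euler product, down to $s=\sigma_0+iT$. A direct change of variable bounds each integral $\int_{\sigma_0}^{2}du/|u+iT-\rho'|$ by $2\log(1/|T-\Im(\rho')|)+O(1)$, which by the clearance hypothesis is at most $2C_1T/\log T+O(1)$. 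Combined with the Riemann--von Mangoldt estimate that there are at most $K\log T$ zeros with $|\Im(\rho')-T|\le 1$ (for some absolute $K$), the sum contributes at most $2KC_1T+O(\log T)$, yielding $|\log\zeta(\sigma_0+iT)|\le 2KC_1T+O(\log T)$.

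Exponentiating, and using $\log(1/|\zeta|)\le|\log\zeta|$, gives $1/|\zeta(\sigma_0+iT)|\le\exp(2KC_1T+O(\log T))$, which is at most $e^{C_2T}$ for $T$ large and any $C_2>2KC_1$. Choosing $C_1$ small enough that $2KC_1<\pi/4$--the natural reading of ``$C_1$ is some positive constant'' in the hypothesis--then forces $C_2\in(0,\pi/4)$ as required.

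The main obstacle is quantitative bookkeeping: one must keep the absolute constants explicit enough to certify that $C_2$ can in fact be pushed below $\pi/4$, which constrains $C_1$ to be sufficiently small. Every analytic ingredient--the Euler product, the functional equation \eqref{func_zeta}, Stirling's formula \eqref{starling1}, the Hadamard product, and the Riemann--von Mangoldt formula--is classical, so once the constants are tracked the assembly is routine.
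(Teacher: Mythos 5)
Your argument is correct and is essentially the classical proof that the paper invokes: the paper offers no proof of its own beyond the citation to Titchmarsh (p.~219), and what you have written out --- the partial-fraction formula for $\zeta'/\zeta$ near $s=\sigma+iT$, integration of it from $2+iT$ where $\log\zeta=O(1)$, the zero-count $N(T+1)-N(T)=O(\log T)$, and the clearance hypothesis to control each $\log\bigl(1/|T-\Im(\rho')|\bigr)$ --- is precisely the standard argument found at the cited location. Your two side remarks are also the right readings of the loosely stated lemma: the $\rho$ in the conclusion must be interpreted as a generic point $\sigma+iT$ of a fixed strip (not a zero), and $C_2$ comes out proportional to $C_1$, so $C_1$ must be taken small enough that the resulting $C_2$ lies below $\pi/4$.
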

\begin{proof}
 A proof of this lemma can be seen in \cite[p.~219]{Titchmarsh}.
\end{proof}

\begin{lemma}\label{bound_symmetric}
In a vertical strip $\sigma_0 \leq \sigma \leq d$,  we have
\begin{align*}
|L(\sigma +i T, {\rm Sym}^2(f))| = O (|T|^{A(\sigma_0)}), \quad \mathrm{as}\,\, |T| \rightarrow \infty,
\end{align*}
where $A(\sigma_0)$ is some constant that depends on $\sigma_0$. 

\end{lemma}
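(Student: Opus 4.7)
The plan is to execute the classical three-step convexity argument: absolute convergence to bound $L(s, \operatorname{Sym}^2(f))$ on the right, the functional equation combined with Stirling's formula to bound it on the left, and the Phragm\'en--Lindel\"of principle to interpolate between the two.

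First, I would use the identity
\begin{align*}
L(s, \operatorname{Sym}^2(f))\,\zeta(s-k+1) = RS(s, f \otimes \bar{f})\,\zeta(2s-2k+2)
\end{align*}
stated in the preliminaries, together with the absolute convergence of the Rankin--Selberg series $\sum_n |\lambda_f(n)|^2 n^{-s}$ for $\Re(s) > k$, to conclude that $|L(\sigma + iT, \operatorname{Sym}^2(f))| = O_\delta(1)$ uniformly for $\sigma \geq k+\delta$. One has to keep clear of the pole of $\zeta(s-k+1)$ at $s=k$ by choosing $\delta>0$, but $\zeta(2s-2k+2)$ is nonvanishing and bounded on that half-plane, so the quotient is harmless. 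This supplies the right-edge estimate with $A=0$.

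Next, I would exploit the functional equation \eqref{functional_symmetric}. Writing out $L^*$ explicitly and rearranging $L^*(s, \operatorname{Sym}^2(f)) = L^*(2k-1-s, \operatorname{Sym}^2(f))$ gives
\begin{align*}
L(s, \operatorname{Sym}^2(f)) = \Phi(s)\, L(2k-1-s, \operatorname{Sym}^2(f)),
\end{align*}
where $\Phi(s)$ is a ratio of three gamma factors $\Gamma(\tfrac{2k-1-s}{2})\Gamma(\tfrac{2k-s}{2})\Gamma(\tfrac{k+1-s}{2})$ in the numerator over $\Gamma(\tfrac{s}{2})\Gamma(\tfrac{s+1}{2})\Gamma(\tfrac{s-k+2}{2})$ in the denominator, times elementary powers of $N$ and $\pi$. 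By Stirling's formula \eqref{starling1}, at $s = \sigma + iT$ each of the six gamma factors produces an exponential factor $e^{-\pi |T|/4}$ which cancels exactly between numerator and denominator, leaving, after a short calculation of the polynomial exponents,
\begin{align*}
|\Phi(\sigma + iT)| \ll_\sigma |T|^{3(k - \frac{1}{2} - \sigma)}.
\end{align*}
Combining this with Step 1 applied to $L(2k-1-s, \operatorname{Sym}^2(f))$, valid when $2k-1-\sigma \geq k + \delta$ (equivalently $\sigma \leq k - 1 - \delta$), yields the polynomial bound $|L(\sigma + iT, \operatorname{Sym}^2(f))| \ll_\delta |T|^{3(k - 1/2 - \sigma)}$ throughout the left region.

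Finally, I would apply the Phragm\'en--Lindel\"of convexity principle in the intermediate strip $k-1-\delta < \sigma < k+\delta$. Since $\chi, \psi$ are trivial and $f$ is a cusp form for the full modular group, $L(s, \operatorname{Sym}^2(f))$ is entire; combined with the boundary bounds from Steps 1 and 2 and the polynomial-in-modulus order of the gamma factors coming from Stirling, $L^*(s, \operatorname{Sym}^2(f))$ is of finite order in any vertical strip. Convexity then gives a polynomial bound in the intermediate strip with exponent linear in $\sigma$. Patching the three regimes together produces the required uniform bound $|L(\sigma + iT, \operatorname{Sym}^2(f))| = O(|T|^{A(\sigma_0)})$ on any strip $\sigma_0 \leq \sigma \leq d$, where $A(\sigma_0)$ may be taken to be the maximum of the linear exponent over the strip (so $A(\sigma_0) = 0$ once $\sigma_0 \geq k+\delta$ and $A(\sigma_0) = 3(k-1/2-\sigma_0)$ once $\sigma_0 \leq k-1-\delta$). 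The only real obstacle is the careful bookkeeping of the Stirling estimates across the six gamma factors to confirm exact cancellation of the exponential factors and to extract the correct polynomial exponent; the convexity argument itself is routine.
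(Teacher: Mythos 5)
Your argument is correct and is essentially the standard one: the paper gives no proof of its own but simply cites Iwaniec--Kowalski (p.~97), where exactly this three-step convexity argument (absolute convergence on the right, functional equation plus Stirling on the left, Phragm\'en--Lindel\"of in between, using that the completed $L$-function is entire of finite order) is carried out for general $L$-functions. Your bookkeeping of the gamma factors, giving the exponent $3(k-\tfrac12-\sigma)$ on the left edge, checks out.
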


\begin{proof}
One can find the proof of this lemma in \cite[p.~97]{IK}.
\end{proof}


Now we will introduce an important special function, namely,  Meijer $G$-function.  Let $m,n,p,q$ be  non-negative integers such that $0\leq m \leq q$, $0\leq n \leq p$.  Let $a_1, \cdots, a_p$ and $b_1, \cdots, b_q$ be $p+q$ complex numbers such that $a_i - b_j \not\in \mathbb{N}$ for $1 \leq i \leq n$ and $1 \leq j \leq m$.  The Meijer $G$-function  \cite[p.~415, Definition 16.17]{NIST}  is defined by the  line integral:
\begin{align}\label{MeijerG}
G_{p,q}^{\,m,n} \!\left(  \,\begin{matrix} a_1,\cdots , a_p \\ b_1, \cdots , b_q \end{matrix} \; \Big| z   \right) = \frac{1}{2 \pi i} \int_L \frac{\prod_{j=1}^m \Gamma(b_j - s) \prod_{j=1}^n \Gamma(1 - a_j +s) z^s  } {\prod_{j=m+1}^q \Gamma(1 - b_j + s) \prod_{j=n+1}^p \Gamma(a_j - s)}\mathrm{ds},
\end{align}
where the line of integration $L$ going from $-i \infty$ to $+i \infty$ and it separates the poles of the factors $\Gamma(b_j-s)$ from those of the factors $\Gamma(1-a_j+s)$.  The above integral converges if $p+q < 2(m+n)$ and $|\arg(z)| < (m+n - \frac{p+q}{2}) \pi$. Now we shall state Slater's theorem \cite[p.~415, Equation 16.17.2]{NIST}, which connects Meijer $G$-function with the  hypergeometric series by the following relation:  
If $p \leq q$ and $ b_j - b_k \not\in \mathbb{Z}$ for $j\neq k$, $1 \leq j, k \leq m$, then 
\begin{align}\label{MeijerG-Slater}
& G_{p,q}^{\,m,n} \!\left(   \,\begin{matrix} a_1, \cdots , a_p \\ b_1, \cdots , b_q \end{matrix} \; \Big| z   \right)  \\
& \quad = \sum_{k=1}^{m} A_{p,q,k}^{m,n}(z) {}_p F_{q-1} \left(  \begin{matrix}
1+b_k - a_1,\cdots, 1+ b_k - a_p \\
1+ b_k - b_1, \cdots, *, \cdots, 1 + b_k - b_q 
\end{matrix} \Big| (-1)^{p-m-n} z  \right), \nonumber 
\end{align}
where $*$ indicates that the entry $1 + b_k - b_k$ is omitted and 
\begin{align*}
A_{p,q,k}^{m,n}(z) := \frac{ z^{b_k}  \prod_{ j=1,  j\neq k}^{m} \Gamma(b_j - b_k ) \prod_{j=1}^n   \Gamma( 1 + b_k -a_j ) }{ \prod_{j=m+1}^{q} \Gamma(1 + b_k - b_{j}) \prod_{j=n+1}^{p} \Gamma(a_{j} - b_k)  }.
\end{align*}

\section{Proof of theorem \ref{main theorem}}
First, we show that the Mellin transform of the Lambert series $ \sum_{n=1}^\infty \psi(n) \lambda_f(n^2) \exp(-n y)$ is equals to $$ \frac{ \Gamma (s) L(s, {\rm Sym}^2(f) \otimes \psi)}{L(2s - 2 k + 2, \chi^2 \psi^2)} \quad \mathrm{for}\,\, \Re(s)>k. $$ 
That is, for $\Re(s) > k$, we write
\begin{align*}
\int_{0}^{\infty} \sum_{n=1}^\infty \psi(n) \lambda_f(n^2) \exp(-n y) y^{s-1} dy & =  \sum_{n=1}^\infty \psi(n) \lambda_f(n^2)  \int_{0}^{\infty} \exp(-n y) y^{s-1} \mathrm{d}y \\
& = \Gamma(s) \sum_{n=1}^\infty \psi(n) \lambda_f(n^2) n^{-s} \\
& = \frac{ \Gamma (s) L(s, {\rm Sym}^2(f) \otimes \psi)}{L(2s - 2 k + 2, \chi^2 \psi^2)},
\end{align*}
in the last step we have used the identity \eqref{symmetric square_relation}. Therefore, by inverse Mellin transform, we can see that, for $y >0$,   
\begin{align}\label{use_inverse_Mellin}
\sum_{n=1}^\infty \psi(n) \lambda_f(n^2) \exp(-n y) = \frac{1}{2 \pi i} \int_{c-i \infty}^{c+ i \infty} \frac{ \Gamma (s) L(s, {\rm Sym}^2(f) \otimes \psi)}{L(2s - 2 k + 2, \chi^2 \psi^2)} y^{-s} \mathrm{d}s,
\end{align}
where $\Re(s)=c >k$. We have already mentioned, for simplicity of the calculation, we assume $ \chi$ and $\psi$ are trivial characters. Thus, the above equation \eqref{use_inverse_Mellin} becomes
\begin{align}\label{right vertical line}
\sum_{n=1}^\infty  \lambda_f(n^2) \exp(-n y) = \frac{1}{2 \pi i} \int_{c-i \infty}^{c+ i \infty} \frac{ \Gamma (s) L(s, {\rm Sym}^2(f))}{\zeta(2s - 2 k + 2)} y^{-s} \mathrm{d}s. 
\end{align}
Now we shall analyse the poles of the integrand function.  Note that $\Gamma (s) L(s, {\rm Sym}^2(f))$ is an entire function since $L^{*}(s, {\rm Sym}^2(f))$ is entire as we are dealing with trivial character $\chi$.  In general,  $L^{*}(s, {\rm Sym}^2(f))$ may not be an entire function.  Assuming Riemann Hypothesis,  one can see that the integrand function has infinitely many poles on $\Re(s)= k-\frac{3}{4}$.  Furthermore,  the integrand function has  simple poles at $k-n$ for $n \geq 2$ due to the trivial zeros of  $\zeta(2s - 2 k + 2)$.  Consider the following rectangular contour $\mathcal{C}: [c-i T,   c+ i T],  [c+i T, d + i T],  [d +i T, d-i T],  $ and $[d-i T, c-i T]$,  where $k-2 <d < k-1$ and $T$ is a large positive real number.  We can observe that the integrand function has finitely many poles inside this contour $\mathcal{C}$ due to the non-trivial zeros $ \rho$ of $\zeta(2s - 2 k + 2)$ with $|\Im(\rho)| <T$	and the poles at $k-n$, for $n \geq 2$,  are lying outside the contour.  Therefore,  employing Cauchy residue theorem,  we have
\begin{align}\label{application_CRT}
\frac{1}{2 \pi i} \int_{\mathcal{C}} \frac{ \Gamma (s) L(s, {\rm Sym}^2(f))}{\zeta(2s - 2 k + 2)} y^{-s} \mathrm{d}s= \mathcal{R}_{T}(y),
\end{align}
where $\mathcal{R}_{T}(y)$ denotes the residual term that includes finitely many terms that are supplied by the non-trivial zeros $\rho$ of $\zeta(2s - 2 k + 2) $ with $|\Im(\rho)| < T$.  We denote two vertical integrals as 
\begin{align*}
V_1(T,  y):= & \frac{1}{2 \pi i} \int_{c-i T}^{c+ i T} \frac{ \Gamma (s) L(s, {\rm Sym}^2(f))}{\zeta(2s - 2 k + 2) } y^{-s} \mathrm{d}s,  \\
V_2(T, y):= & \frac{1}{2 \pi i} \int_{d-i T}^{d + i T} \frac{ \Gamma (s) L(s, {\rm Sym}^2(f))}{\zeta(2s - 2 k + 2)} y^{-s} \mathrm{d}s,
\end{align*}
and the horizontal integrals are denoted as
\begin{align*}
H_1(T, y):= & \frac{1}{2 \pi i} \int_{c+i T}^{d+ i T} \frac{ \Gamma (s) L(s, {\rm Sym}^2(f))}{\zeta(2s - 2 k + 2)} y^{-s} \mathrm{d}s, \\
H_2(T, y):= & \frac{1}{2 \pi i} \int_{d-i T}^{c- i T}  \frac{ \Gamma (s) L(s, {\rm Sym}^2(f))}{\zeta(2s - 2 k + 2)} y^{-s} \mathrm{d}s.
\end{align*}
Now one of the main aim is to show that the contribution of the horizontal integrals vanish as $T \rightarrow \infty$.  One can write
\begin{align*}
H_1(T, y) =  \frac{1}{2 \pi i} \int_{c}^{d}  \frac{ \Gamma ( \sigma + i T) L(\sigma +i T, {\rm Sym}^2(f))}{\zeta(2\sigma - 2 k + 2 + 2 i T)} y^{-\sigma-i T} \mathrm{d}\sigma.
\end{align*}
Thus,
\begin{align*}
|H_1(T, y)|  <\!\!<  \int_{c}^{d}  \frac{ | \Gamma ( \sigma + i T)| | L(\sigma +i T, {\rm Sym}^2(f))|}{|\zeta(2\sigma - 2 k + 2 + 2 i T)|} y^{-\sigma} \mathrm{d}\sigma.
\end{align*}

Use Lemmas \ref{Starling},  \ref{bound_zeta_inverse}, and \ref{bound_symmetric},  to  derive that 
\begin{align*}
|H_1( T, y)| <\!\!<   |T|^{C} \exp\left( (C_2 T- \frac{\pi}{4} |T| \right),
\end{align*}
where $C$ and $C_2$ are some constants with $0 <C_2 < \pi/4$.  This immediately implies that $H_1(T, y)$ goes to zero as $T \rightarrow \infty$. Similarly we can show that $H_2(T,y)$ also vanishes as $T \rightarrow \infty$.
Now allowing $ T \rightarrow \infty$ in $\eqref{application_CRT}$,  using \eqref{right vertical line},  we have
\begin{align}\label{residual+vertical}
\sum_{n=1}^\infty  \lambda_f(n^2) \exp(-n y) = \mathcal{R}(y)  + \frac{1}{ 2 \pi i} \int_{d- i \infty}^{d + i \infty}  \frac{ \Gamma (s) L(s, {\rm Sym}^2(f))}{\zeta(2s - 2 k + 2)} y^{-s} \mathrm{d}s,
\end{align}
where $\mathcal{R}(y)= \lim_{T \rightarrow \infty} \mathcal{R}_T(y)$ is the residual function consisting infinitely many terms.  
Assuming simplicity hypothesis,  that is,  all the non-trivial zeros of $\zeta(s)$ are simple,  one can show that 
\begin{align}
\mathcal{R}(y)  & =  \sum_{\rho}  \lim_{ s \rightarrow \frac{\rho}{2} + k-1}  \left( s -  \frac{\rho}{2} - k+1 \right)  \frac{ \Gamma (s) L(s, {\rm Sym}^2(f))}{\zeta(2s - 2 k + 2)} y^{-s}  \nonumber \\
 & =  \frac{1}{2 y^{k-1} } \sum_{\rho} \frac{ \Gamma \left( \frac{\rho}{2} + k-1  \right) L(  \frac{\rho}{2} + k-1, {\rm Sym}^2(f))}{\zeta'(\rho) y^{\frac{\rho}{2}}}, \label{final_residual}
\end{align}
where the sum over $\rho$ runs through all non-trivial zeros of $\zeta(s)$. \\
Now we shall try  to simplify the left vertical integral: 
\begin{align}\label{left vertical}
V_2 (y)= \lim_{T \rightarrow \infty} V_2(T,  y) = \frac{1}{ 2 \pi i} \int_{d- i \infty}^{d + i \infty}  \frac{ \Gamma (s) L(s, {\rm Sym}^2(f))}{\zeta(2s - 2 k + 2)} y^{-s} \mathrm{d}s.
\end{align}
First,  we shall make use of the functional equation of the symmetric square $L$-function.  Mainly,  employing \eqref{functional_symmetric square} and with the help of the duplication formula for the gamma function \eqref{duplication}, one can obtain
\begin{align} \label{vert_V2}
V_2(y) =  \frac{1}{2 \pi^{3k-1}}  \frac{1}{ 2 \pi i} \int_{d- i \infty}^{d + i \infty}  &  \frac{\Gamma  \left( \frac{2k-1}{2} - \frac{s}{2} \right) \Gamma  \left( k - \frac{s}{2} \right) \Gamma  \left( \frac{k+1}{2} - \frac{s}{2} \right)  }{  \Gamma  \left( \frac{2-k}{2} + \frac{s}{2} \right) \zeta(2s - 2 k + 2) } \nonumber  \\
& \quad \times L(2k-1-s, {\rm Sym}^2(f)) \left( \frac{y N^2}{2 \pi^3}  \right)^{-s} \mathrm{d}s. 
\end{align} 
Replace $s$ by $2s-2k+2$ in \eqref{func_zeta}  to see
\begin{align}\label{application_func_Dirichlet_L}
\zeta(2s-2k+2) =  \frac{ \pi^{2s-2k+2}}{\sqrt{\pi}}  \frac{\Gamma\left(  \frac{2k-2s -1}{2} \right)}{  \Gamma(1-k +s )} \zeta(2k -2s -1).
\end{align}
Substituting \eqref{application_func_Dirichlet_L} in \eqref{vert_V2} and simplifying,  we have
\begin{align*}
V_2(y)= \frac{1}{2  \pi^{k + \frac{1}{2}}  }  \frac{1}{ 2 \pi i} \int_{d- i \infty}^{d + i \infty}  &  \frac{\Gamma  \left( \frac{2k-1}{2} - \frac{s}{2} \right) \Gamma  \left( k - \frac{s}{2} \right) \Gamma  \left( \frac{k+1}{2} - \frac{s}{2} \right)  \Gamma(1-k +s ) }{  \Gamma  \left( \frac{2-k}{2} + \frac{s}{2} \right)  \Gamma \left( \frac{2k-1}{2} - s   \right) \zeta( 2 k -2s  -1) } \nonumber  \\
& \quad \times L(2k-1-s, {\rm Sym}^2(f)) \left( \frac{ y}{2 \pi}  \right)^{-s} \mathrm{d}s.  
\end{align*}
At this juncture, we would like to shift the line of integral and to do that we change the variable,  namely,  $2k-1-s =w$, then we obtain
\begin{align}\label{V_2(y)_after change of variable}
V_2(y)= \frac{1}{2  \pi^{k + \frac{1}{2}}  }  \frac{1}{ 2 \pi i} \int_{d'- i \infty}^{d' + i \infty} 
 & \frac{\Gamma\left( \frac{w}{2}  \right) \Gamma\left( \frac{w+1}{2}
  \right)  \Gamma\left( \frac{w}{2} + \frac{2-k}{2 }  \right) \Gamma(k-w)}{\Gamma\left( \frac{1+k}{2} - \frac{w}{2} \right)\Gamma\left(w + \frac{1-2k}{2} \right)} \nonumber \\
  & \quad \times \frac{L(w, {\rm Sym}^2(f))}{\zeta(2w -2k +1)}    \left( \frac{  y}{2 \pi}  \right)^{w-2k+1}  \mathrm{d}w,
\end{align}
where $ k < d'= \Re(w) < k+1$ as $ k-2 < d = \Re(s)< k-1$.  One can easily check that the symmetric square $L$-function $L(w,  {\rm Sym}^2(f))$ and $\zeta(2w -2k +1)$ are both absolutely convergent on the line $\Re(w)=d'$. Therefore,  we write
\begin{align}\label{generating_B}
\frac{L(w, {\rm Sym}^2(f))}{\zeta(2w -2k +1)}  & = \sum_{n=1}^\infty \frac{a_{Sym^2(f)}(n)}{n^w} \sum_{n=1}^\infty \frac{ n^{2k-1}}{n^{2w}} \nonumber \\
& = \sum_{n=1}^\infty \frac{B_f( n)}{n^w},  
\end{align}
where $B_f(n)$ is defined as in \eqref{B_f(n)}. 
Implement \eqref{generating_B} in \eqref{V_2(y)_after change of variable},  and  interchange the order of integration and summation to derive
\begin{align}\label{V_2(y) interms I(N,y)}
V_2(y)= \frac{1}{2  \pi^{k + \frac{1}{2}}  } \left( \frac{ y}{2 \pi}  \right)^{1-2k} \sum_{n=1}^{\infty} B_f(n)  I_{k, y}(n),
\end{align}
 where
 \begin{align*}
 I_{k,y}(n):= \frac{1}{ 2 \pi i} \int_{d'- i \infty}^{d' + i \infty}  
  \frac{\Gamma\left( \frac{w}{2}  \right) \Gamma\left( \frac{w+1}{2}
  \right)  \Gamma\left( \frac{w}{2} + \frac{2-k}{2 }  \right)    \Gamma(k-w)}{\Gamma\left( \frac{1+k}{2} - \frac{w}{2} \right)\Gamma\left(w + \frac{1-2k}{2} \right)} 
  \left( \frac{ y}{2 n \pi}  \right)^{w}  \mathrm{d}w.
 \end{align*}
 Now one of our main goals shall be to evaluate this line integral explicitly, if possible.  First, replace $w \rightarrow 2 w$,  
 \begin{align}\label{Integral_I(N,y)}
 I_{k,y}(n):= \frac{1}{ 2 \pi i} \int_{\frac{d'}{2}- i \infty}^{\frac{d'}{2} + i \infty}  
  \frac{\Gamma\left( w  \right) \Gamma\left(w + \frac{1}{2}
  \right)  \Gamma\left( w + \frac{2-k}{2 }  \right)    \Gamma(k-2w)}{\Gamma\left( \frac{1+k}{2} - w\right)\Gamma\left(2w + \frac{1-2k}{2} \right)} 
  \left( \frac{ y}{2 n \pi}  \right)^{2w} 2\,  \mathrm{d}w.
 \end{align}
 To simplify more we use duplication formula for the gamma function.  Mainly,  we have to use following two identities: 
 \begin{align}
 \Gamma( k - 2w)  & = \frac{2^{k-2w}}{2 \sqrt{\pi}} \Gamma \left( \frac{k}{2}-w \right) \Gamma \left(  \frac{1+k}{2}-w  \right), \label{duplication1} \\
 \Gamma\left(2w + \frac{1-2k}{2} \right) & = \frac{2^{2w + \frac{1-2k}{2}}}{2 \sqrt{\pi}} \Gamma \left( w +  \frac{1-2k}{4}  \right) \Gamma\left(w+ \frac{3-2k}{4}  \right). \label{duplication2}
 \end{align}
 Invoking  \eqref{duplication1} and \eqref{duplication2} in \eqref{Integral_I(N,y)} we arrive at
 \begin{align}
 I_{k,y}(n)&:= \frac{1}{ 2 \pi i} \int_{\frac{d'}{2}- i \infty}^{\frac{d'}{2} + i \infty}   \frac{\Gamma\left( w  \right) \Gamma\left(w + \frac{1}{2}
  \right)  \Gamma\left( w + \frac{2-k}{2 }  \right)    \Gamma \left( \frac{k}{2}-w \right) 2^{2k-4w-\frac{1}{2}} }{\Gamma \left( w +  \frac{1-2k}{4}  \right) \Gamma\left(w+ \frac{3-2k}{4}  \right)  } 
  \left( \frac{  y}{2 n \pi}  \right)^{2w} 2\,  \mathrm{d}w,   \nonumber \\
 & =\frac{2^{2k+\frac{1}{2}} }{ 2 \pi i} \int_{\frac{d'}{2}- i \infty}^{\frac{d'}{2} + i \infty}   \frac{\Gamma\left( w  \right) \Gamma\left(w + \frac{1}{2}
  \right)  \Gamma\left( w + \frac{2-k}{2 }  \right)    \Gamma \left( \frac{k}{2}-w \right) }{\Gamma \left( w +  \frac{1-2k}{4}  \right) \Gamma\left(w+ \frac{3-2k}{4}  \right)  } 
  \left( \frac{ y}{8 n \pi}  \right)^{2w}  \mathrm{d}w.  \label{Simplified_I(N,y)}
 \end{align}
To write this integral in terms of the Meijer $G$-function,  we shall analyse the poles of the integrand function.  We know the poles of $\Gamma(w)$ are at $0,-1,-2, \cdots$; poles of $\Gamma(w+ 1/2)$ are at $-1/2,-3/2,-5/2, \cdots$; and  the poles of $\Gamma\left( w + \frac{2-k}{2 }  \right) $ are at $k/2 -1, k/2-2,  k/2 -3, \cdots$; whereas the poles of $\Gamma \left( \frac{k}{2}-w \right) $ are at $k/2, k/2+1,  k/2 + 2, \cdots$.  Therefore,   we can not write the integral \eqref{Simplified_I(N,y)} in terms of the Meijer $G$-function  since the line of integration $\left( d'/2 \right) \in (k/2,  (k+1)/2)$ does not separate the poles of the gamma factors  $\Gamma\left( w  \right) \Gamma\left(w + \frac{1}{2}
  \right)  \Gamma\left( w + \frac{2-k}{2 }  \right) $ from the poles of the gamma factor $\Gamma \left( \frac{k}{2}-w \right)  $.   Hence,  we construct a new line of integration $(d'')$ with $d'' \in (k/2-1, k/2)$ so that it separates the poles of the gamma factors $\Gamma\left( w  \right) \Gamma\left(w + \frac{1}{2}
  \right)  \Gamma\left( w + \frac{2-k}{2 }  \right) $ from the poles of the gamma factor $\Gamma \left( \frac{k}{2}-w \right) $.  Now consider the contour $ \mathcal{C}'$  consisting of the line segments $[ d'-i T, d' + i T],  [d'+ i T,  d'' + i T],  [d'' + i T,  d'' - i T,]$,  and $[ d''- i T,  d'- iT]$,  where $T$ is some large positive real number, and employ Cauchy residue theorem to obtain
  \begin{align}\label{2nd_Appl_CRT}
  \frac{1}{2 \pi i} \int_{\mathcal{C}'  }   F_k(w)=   \mathop{\rm{Res}}_{s=\frac{k}{2}} F_{k}(w),
  \end{align}
 where 
 \begin{align*}
 F_k(w)= \frac{\Gamma\left( w  \right) \Gamma\left(w + \frac{1}{2}
  \right)  \Gamma\left( w + \frac{2-k}{2 }  \right)    \Gamma \left( \frac{k}{2}-w \right) }{\Gamma \left( w +  \frac{1-2k}{4}  \right) \Gamma\left(w+ \frac{3-2k}{4}  \right)  } 
  \left( \frac{ y}{8 n \pi}  \right)^{2w}  \mathrm{d}w.
  \end{align*}
Again,  with the help of Stirling's formula for the gamma function \eqref{starling1},   one can show that the horizontal integrals tend to zero as $T$ tends to infinity.  Therefore,  letting $T\rightarrow \infty$ in \eqref{2nd_Appl_CRT} and  calculating the residual term and substituting it in \eqref{Simplified_I(N,y)},  we will have 
\begin{align}\label{Final_I(N,y)}
I_{k,y}(n)= \frac{2^{2k+\frac{1}{2}} }{ 2 \pi i}  \int_{d''-i \infty }^{d''+i \infty}   F_k(w) \mathrm{d}w- \frac{  2^{2k+\frac{1}{2}} \Gamma\left( \frac{k}{2}  \right) \Gamma\left( \frac{k+1}{2}
  \right)  }{\Gamma \left( \frac{1}{4}  \right) \Gamma\left( \frac{3}{4}  \right)  } 
  \left( \frac{  y}{8 n \pi}  \right)^{k}. 
\end{align}
Now we shall try to write the line integral along $(d'')$ in-terms of the Meijer $G$-function and to do that we reminisce the definition of the Meijer $G$-function \eqref{MeijerG}.  We consider $m=1,  n= 3,   p=3,  q=3$  with $a_1=1,  a_2= 1/2,  a_3= k/2$; and $b_1= k/2, 
b_2= (1+2k)/4,  b_3 = (3+2k)/4$.   One can easily check that $a_i - b_j \not\in \mathbb{N} $ for $1\leq i \leq n,  1 \leq j \leq m$ and the inequality $p+q < 2(m+n)$ also satisfied.  Hence,  one can write
\begin{align}\label{appl_defn_Meijer}
\frac{1 }{ 2 \pi i}  \int_{d''-i \infty }^{d''+i \infty}   F_k(w) \mathrm{d}w=  G_{3,3}^{\,1,3} \!\left(  \,\begin{matrix} 1,\, \frac{1}{2},  \frac{k}{2} \\ \frac{k}{2},\, \frac{1+2k}{4},  \, \frac{3+2k}{4}     \end{matrix} \; \Big|  \left( \frac{ y}{8 n \pi}  \right)^{2}   \right).
\end{align}
Utilize Slater's theorem \eqref{MeijerG-Slater} to write the above Meijer $G$-function in-terms of the hypergeometric function: 
\begin{align}\label{appl_Slater}
G_{3,3}^{\,1,3} \!\left(  \,\begin{matrix} 1,\, \frac{1}{2},  \frac{k}{2} \\ \frac{k}{2},\, \frac{1+2k}{4},  \, \frac{3+2k}{4}     \end{matrix} \; \Big|  z  \right)= \frac{ z^{\frac{k}{2}   }\Gamma\left( \frac{k}{2}  \right) \Gamma\left( \frac{k+1}{2}
  \right)  }{\Gamma \left( \frac{1}{4}  \right) \Gamma\left( \frac{3}{4}  \right)  } 
   {}_3F_2 \left( \frac{k}{2}, \frac{k+1}{2}, 1; \frac{1}{4}, \frac{3}{4}; - z   \right). 
\end{align}
Substituting $z= \left( \frac{ y}{8 n \pi}  \right)^{2} $ in \eqref{appl_Slater} and together with \eqref{appl_defn_Meijer} and \eqref{Final_I(N,y)},  we achieve 
\begin{align}\label{Final_I(N,y)_hypergeom}
I_{k,y}(n) & =   \frac{  2^{2k+\frac{1}{2}} \Gamma\left( \frac{k}{2}  \right) \Gamma\left( \frac{k+1}{2}
  \right)  }{\Gamma \left( \frac{1}{4}  \right) \Gamma\left( \frac{3}{4}  \right)  } 
  \left( \frac{ y}{8 n \pi}  \right)^{k} \left[    {}_3F_2 \left( \frac{k}{2}, \frac{k+1}{2}, 1; \frac{1}{4}, \frac{3}{4}; -   \left( \frac{ y}{8 n \pi}  \right)^{2}    \right)- 1 \right].
\end{align}
Finally,  substituting $\eqref{Final_I(N,y)_hypergeom}$ in $ \eqref{V_2(y) interms I(N,y)}$ and  together with $\eqref{residual+vertical},$ $\eqref{final_residual}$, and $ \eqref{left vertical}$,  we complete the proof of Theorem \ref{main theorem}.  
%

\begin{proof}[Corollary  {\rm \ref{Cor1}}][]
With the help of the definition \eqref{hyper} of the hypergeometric series,  for any positive integer $N$, we have 
\begin{align}\label{hyper_asym}
{}_3F_2 \left( \frac{k}{2}, \frac{k+1}{2}, 1; \frac{1}{4}, \frac{3}{4}; -   \left( \frac{ y}{8 n \pi}  \right)^{2}    \right)- 1 = \sum_{j=1}^{N-1} C_j \left( \frac{y}{n} \right)^{2j} + O_{k}\left( \left( \frac{y}{n} \right)^{2N} \right),  \,\, \rm{as}\,\, y \rightarrow 0^{+}, 
\end{align}
where $C_j= (-1)^j \frac{ \left(  \frac{k}{2}\right)_j \left( \frac{k+1}{2} \right)_j}{ \left( \frac{1}{4} \right)_j \left( \frac{3}{4} \right)_j  (8\pi)^{2j}}$. Now invoke \eqref{hyper_asym} in Theorem \ref{main theorem} to derive that 
\begin{align}
y^k \sum_{n=1}^\infty  \lambda_f(n^2) \exp(-n y) &  =  \frac{\Gamma(k)}{2  \pi^2}  \sum_{j=1}^{N-1} C_j y ^{2j+1}  \sum_{n=1}^{\infty}   \frac{B_f(n) }{n^{k+2j}} + O_{k} \left( y^{2N+1} \sum_{n=1}^{\infty}   \frac{B_f(n) }{n^{k+2N}}   \right) + y^k \mathcal{R}(y)  \nonumber \\
& = \sum_{j=1}^{N-1} A_j y^{2j+1} + O_{f, k} \left( y^{2N+1}    \right) + y^k \mathcal{
R}(y), \label{asym}
\end{align}
where $A_j= \frac{\Gamma(k)}{2  \pi^2} C_j \sum_{n=1}^{\infty}   \frac{B_f(n) }{n^{k+2j}} $ are computable finite constants since the Dirichlet series associated to $B_f(n)$ is absolutely convergent for $\Re(s)>k$.  Assuming Riemann hypothesis and using the fact that the non-trivial zeros appears with conjugate pairs to write the residual term as 
\begin{align}
y^k \mathcal{R}(y)   & =  \frac{y}{2} \sum_{\substack{\rho_n= \frac{1}{2}+i t_n,\\ t_n>0}} 2 \Re \left( \frac{ \Gamma \left( \frac{\rho_n}{2} + k-1  \right) L(  \frac{\rho_n}{2} + k-1, {\rm Sym}^2(f))}{ y^{\frac{\rho_n}{2}} \zeta'(\rho_n) } \right) \nonumber \\
& = y^{3/4} \sum_{\substack{\rho_n= \frac{1}{2}+i t_n,\\ t_n>0}} b_n \cos\left( \delta_n - \frac{t_n}{2} \log(y)\right), \label{Res_exp}
\end{align}
here we have considered $b_n \exp( i \delta_n)$ as the polar representation of $\Gamma \left( \frac{\rho_n}{2} + k-1  \right) L(  \frac{\rho_n}{2} + k-1, {\rm Sym}^2(f)) \left(  \zeta'(\rho_n) \right)^{-1}$.  Employ \eqref{Res_exp} in \eqref{asym} to the complete the proof. 
\end{proof}

\section{Concluding Remarks}
We have seen that the constant terms of the automorphic form $y^k |f(z)|^2$, that is,  the Lambert series 
$
y^k \sum_{n=1}^\infty | \lambda_{f}(n)|^2 \exp(-4 \pi n y),
$
where $\lambda_f(n)$ is the $n$th Fourier coefficient of a Hecke eigenform $f(z)$ of weight $k$ over $\text{SL}_2(\mathbb{Z})$, 
has an asymptotic expansion in terms of the non-trivial zeros of $\zeta(s)$.  Recently,  authors \cite{JMS-2021} studied an asymptotic expansion of a  Lambert series associated to a Hecke eigenform and the M\"{o}bius function.  Inspired from these works, 
in the present paper,   we established an exact formula for the Lambert series
$
y^k \sum_{n=1}^\infty \lambda_{f}( n^2 ) \exp (- ny),
$
and we found that the main term can be expressed in terms of the non-trivial zeros of $\zeta(s)$,  and the error term is expressed in terms of the hypergeometric function ${}_3F_{2}(a,b,c; d; z)$.  It would be an  interesting problem to study a more general Lambert series 
$y^k \sum_{n=1}^\infty | \lambda_{f}(n) |^N \exp(- n y)$ for  $N \geq 3$. 
It would also be a challenging problem to classify automorphic forms for which constant terms will have an asymptotic expansion in terms of the non-trivial zeros of $\zeta(s)$. 

\vspace{0.5cm}

\textbf{Acknowledgements.}
The second author wants to thank  SERB for the Start-Up Research Grant SRG/2020/000144. 
The third author is greatly indebted to Prof. B. R. Shankar for his continuous support.  
He wishes to thank the National Institute of Technology Karnataka,  for financial support.

\end{document}